\newtheorem{thm}{Theorem}
\newtheorem{cor}{Corollary}
\begin{document}
\title{Congruences for Generalized Frobenius Partitions with Nonzero Row Difference}
\author{Kelsey Scott}
\maketitle
\section{Introduction}
We wish to extend Andrews' \cite{AndrewsGFP} general principle to include arrays with nonzero row difference and establish some congruences for these arrays. If $f_A(z,q)=f_A(z)=\sum P_A(m,n)z^{m}q^n$
denotes the generating function for the number of ordinary partitions of $n$ into $m$ parts subject to the set of restrictions $A$, then the coefficient of $z^{\alpha}$ in
\begin{equation*}
f_A(zq)f_B(z^{-1})=\sum P_A(m_1,n_1)z^{m_1}q^{m_1+n_1}\sum P_B(m_2,n_2)z^{-m_2}q^{n_2}
\end{equation*}
is the generating function
\begin{equation*}
\Phi_{A,B,\alpha}(q)=\sum_{n=0}^{\infty}\phi_{A,B,\alpha}(n)q^n
\end{equation*}
where $\phi_{A,B,\alpha}(n)$ is the number of arrays of weight $n=m_1+n_1+n_2$ wherein $(a_1,a_2,...,a_{m_1})$ is a partition of $n_1$ into $m_1$ parts arranged in nonincreasing order and subject to the set of restrictions $A$, and $(b_1,b_2,...,b_{m_2})$ is a partition of $n_2$ into $m_2$ parts arranged in nonincreasing order and subject to the set of restrictions $B$. We call $\alpha=m_1-m_2$ the \emph{row difference}. 

\section{Generating Functions}
It follows directly from the extension of Andrews' principle that the coefficient of $z^{\alpha}$ in 
\begin{equation*}
\prod_{\lambda_i=0}^{\infty}(1+zq^{({\lambda_i}+1)}+...+z^kq^{k({\lambda_i}+1)})(1+z^{-1}q^{\lambda_i}+...+z^{-k}q^{k{\lambda_i}})
\end{equation*}
is 
\begin{equation*}
\Phi_{k,\alpha}(q)=\sum_{n=0}^{\infty}\phi_{k,\alpha}(n)q^n
\end{equation*}
where $\phi_{k,\alpha}(n)$ is the number of arrays of weight $n$ and row difference $\alpha$ such that each nonnegative integer is repeated at most $k$ times in each row. 
Furthermore, the coefficient of $z^{\alpha}$ in
\begin{equation*}
\prod_{i=0}^{\infty}(1+zq^{\lambda_i+1})^k(1+z^{-1}q^{\lambda_i})^k
\end{equation*}
is 
\begin{equation*}
C\Phi_{k,a}(q)=\sum_{n=0}^{\infty}c\phi_{k,a}(n)q^n
\end{equation*}
where $c\phi_{k,\alpha}(n)$ is the number of arrays of weight $n$ and row difference $\alpha$ such that each nonnegative integer is taken from one of $k$ copies of the integers. We can index which copy of the integers an entry is taken from by thinking of this as a coloring and denoting the color by a subscript from $\{1,2,..,k\}$.

Next, we establish the formulas for the single variable generating functions with fixed row difference.

\begin{thm}\label{singlevar_phi}  
For any positive integer $k$ and any integer $\alpha$, 
\begin{align*} 
\Phi_{k,\alpha}(q)&=\frac{1}{(q;q)^k_\infty}\sum_{m_1,m_2,...,m_{k-1}=-\infty}^{\infty}(-1)^{\alpha}\zeta^{m_1(1-k)+m_2(2-k)+\cdots+m_{k-1}(-1)+k\alpha}\\ \nonumber
& \times q^{{m_1}^2+{m_2}^2+\cdots+{m^2_{k-1}}-\alpha(m_1+m_2+\cdots+m_{k-1})+\frac{\alpha^2+\alpha+\sum_{1\leq i<j\leq k-1}m_im_j}{2}}.
\end{align*}
where $\zeta=e^{\frac{2i\pi}{k+1}}$.
\end{thm}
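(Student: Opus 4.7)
The plan is to extract the coefficient of $z^\alpha$ from the defining infinite product by combining a cyclotomic factorization of the finite geometric sums with the Jacobi triple product identity. This parallels Andrews' argument for the row-difference-zero case, but now tracks the full $z$-grading rather than just the constant term.

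First, I would apply the identity $1+x+\cdots+x^k = \prod_{j=1}^{k}(1-\zeta^{j}x)$, with $\zeta = e^{2\pi i/(k+1)}$, to each finite geometric factor, taking $x = zq^{\lambda+1}$ and $x = z^{-1}q^\lambda$ in turn. This rewrites the whole product as
\begin{equation*}
\prod_{j=1}^{k}(\zeta^{j}zq;q)_\infty\,(\zeta^{j}z^{-1};q)_\infty.
\end{equation*}
Since $\{\zeta^{-j}:1\le j\le k\} = \{\zeta^{j}:1\le j\le k\}$, the second factor can be harmlessly reindexed to $\prod_{j}(\zeta^{-j}z^{-1};q)_\infty$, which fits the Jacobi triple product template
\begin{equation*}
(x;q)_\infty(q/x;q)_\infty(q;q)_\infty = \sum_{n\in\mathbb{Z}}(-x)^{n}q^{n(n-1)/2}
\end{equation*}
with $x = q\zeta^{j}z$. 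Applying the identity to each paired $j$ gives
\begin{equation*}
\frac{1}{(q;q)_\infty^{k}}\sum_{n_1,\ldots,n_k\in\mathbb{Z}} (-1)^{n_1+\cdots+n_k}\,\zeta^{\,n_1+2n_2+\cdots+kn_k}\, z^{\,n_1+\cdots+n_k}\, q^{\sum_{j} n_j(n_j+1)/2}.
\end{equation*}

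To finish, I would extract the coefficient of $z^\alpha$ by imposing the constraint $n_1+\cdots+n_k = \alpha$, renaming $n_j=m_j$ for $j<k$, and solving $n_k = \alpha - m_1 - \cdots - m_{k-1}$. Substituting into the linear form $\sum j n_j$ produces the exponent $k\alpha + \sum_{j=1}^{k-1}m_j(j-k)$ of $\zeta$, matching the theorem exactly, while the sign becomes a global $(-1)^\alpha$. Expanding $n_k^2 = (\alpha - \sum m_i)^2$ inside $\sum n_j(n_j+1)/2 = \tfrac{1}{2}(\sum n_j^2 + \alpha)$ then yields the stated quadratic form in the $m_j$'s: the base $\sum m_j^2$, the linear correction $-\alpha\sum m_j$, the cross-term contribution from $\sum_{i<j}m_im_j$, and the $(\alpha^2+\alpha)/2$ constant piece. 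The main piece of work is this final algebraic bookkeeping; the only conceptual care needed is at the reindexing step, to make sure the roots of unity in the positive and negative factors pair up in precisely the way demanded by Jacobi triple product.
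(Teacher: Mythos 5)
Your proposal is correct and follows essentially the same route as the paper: the paper simply cites Andrews for the identity $\varepsilon(z,q)=\frac{1}{(q;q)^k_\infty}\prod_{j=1}^{k}\sum_{m_j}(-1)^{m_j}q^{\binom{m_j+1}{2}}z^{m_j}\zeta^{jm_j}$, which is exactly what your roots-of-unity factorization plus Jacobi triple product rederives, and then performs the same coefficient extraction via $m_1+\cdots+m_k=\alpha$. One caveat: if you actually carry out the expansion of $\binom{\alpha-m_1-\cdots-m_{k-1}+1}{2}$, the cross terms enter as $\sum_{1\leq i<j\leq k-1}m_im_j$ with coefficient $1$, not as $\tfrac{1}{2}\sum_{1\leq i<j\leq k-1}m_im_j$ as in the stated exponent, so your assertion that the bookkeeping "yields the stated quadratic form" glosses over what is evidently a typo in the theorem (harmless for $k=2$, where the cross-term sum is empty, which is the only case used later).
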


\begin{proof}
We have that $\Phi_{k,\alpha}(q)$ is the coefficient of $z^\alpha$ in 
\begin{equation} \nonumber 
\varepsilon(z,q)=\prod_{\lambda_i=0}^{\infty}(1+zq^{\lambda_i+1}+\cdots+z^kq^{k(\lambda_i+1)})(1+z^{-1}q^{\lambda_i}+\cdots+z^{-k}q^{k\lambda_i}).
\end{equation}

From Andrews \cite{AndrewsGFP},
\begin{equation} \nonumber
\varepsilon(z, q) =\frac{1}{(q;q)^k_\infty}\prod_{j=1}^{k}\sum_{m_j=-\infty}^{\infty}(-1)^{m_j}q^{\binom{m_j+1}{2}}z^{m_j}\zeta^{jm_j}.
\end{equation}
where $\zeta=e^{\frac{2\pi{i}}{k+1}}$. 
Then, to get the coefficient of $z^\alpha$, we set $m_1+m_2+\cdots+m_k=\alpha$, and solving for $m_k$ we obtain the desired formula.
\end{proof}

\begin{thm} \label{singlevar_cphik}
For any positive integer $k$ and any integer $\alpha$, 
\begin{equation*} 
C\Phi_{k,\alpha}(q)=\frac{\sum_{m_1,m_2,...,m_{k-1}=-\infty}^{\infty}q^{Q(m_1,m_2,...,m_{k-1})}}{(q;q)^k_{\infty}}
\end{equation*}
where
\begin{align*}
Q(m_1,m_2,...,m_{k-1})&={m_1}^2+{m_2}^2+\cdots+{m^2_{k-1}}\\
\nonumber &-\alpha(m_1+m_2+\cdots+m_{k-1})+\frac{\alpha^2+\alpha+\sum_{1\leq i<j\leq k-1}m_im_j}{2}.
\end{align*}
\end{thm}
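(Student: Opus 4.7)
The plan is to mirror the proof of Theorem~\ref{singlevar_phi}, but to exploit the fact that the colored product defining $C\Phi_{k,\alpha}(q)$ is already a genuine $k$-th power and hence avoids the roots-of-unity bookkeeping that the partial-sum product for $\Phi_{k,\alpha}$ required.

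First I would rewrite
\[
\prod_{i=0}^\infty (1+zq^{\lambda_i+1})^k(1+z^{-1}q^{\lambda_i})^k = \left[\prod_{i=0}^\infty (1+zq^{\lambda_i+1})(1+z^{-1}q^{\lambda_i})\right]^k
\]
and apply Jacobi's triple product in the form
\[
\prod_{i=0}^\infty (1+zq^{\lambda_i+1})(1+z^{-1}q^{\lambda_i}) = \frac{1}{(q;q)_\infty}\sum_{m=-\infty}^\infty z^{m}q^{\binom{m+1}{2}}.
\]
Introducing an independent summation index $m_j$ for the $j$-th copy ($j=1,\ldots,k$) then gives
\[
\prod_{i=0}^\infty (1+zq^{\lambda_i+1})^k(1+z^{-1}q^{\lambda_i})^k = \frac{1}{(q;q)_\infty^k}\prod_{j=1}^{k}\sum_{m_j=-\infty}^\infty z^{m_j}q^{\binom{m_j+1}{2}}.
\]

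To extract the coefficient of $z^{\alpha}$ I would impose the linear relation $m_1+m_2+\cdots+m_k=\alpha$ and solve $m_k=\alpha-m_1-\cdots-m_{k-1}$. The total $q$-exponent is $\sum_{j=1}^{k}\binom{m_j+1}{2}=\tfrac{1}{2}\sum_{j=1}^{k}(m_j^{2}+m_j)$; substituting $\sum_{j}m_j=\alpha$ and expanding $m_k^{2}$ via $(m_1+\cdots+m_{k-1})^{2}=\sum_{i}m_i^{2}+2\sum_{i<j}m_im_j$ collects all terms into the quadratic form $Q(m_1,\ldots,m_{k-1})$ stated in the theorem.

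The only step that needs attention is this algebraic simplification of the exponent after eliminating $m_k$, which is the direct analogue of the corresponding step in the proof of Theorem~\ref{singlevar_phi}. Since neither the signs $(-1)^{m_j}$ nor the roots of unity $\zeta^{jm_j}$ appear in the colored case, I expect the computation to be routine; tracking the constant term $\tfrac{\alpha^{2}+\alpha}{2}$ and the cross terms $m_im_j$ correctly is the only real place to slip up.
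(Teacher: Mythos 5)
Your proposal is exactly the paper's proof: apply the Jacobi triple product to each of the $k$ factors, multiply the resulting bilateral sums, impose $m_1+\cdots+m_k=\alpha$, and eliminate $m_k$. One caution on the step you yourself single out: carrying out the expansion actually gives total exponent $\sum_{j=1}^{k-1}m_j^2+\sum_{1\leq i<j\leq k-1}m_im_j-\alpha(m_1+\cdots+m_{k-1})+\tfrac{\alpha^2+\alpha}{2}$, i.e.\ the cross terms appear with coefficient $1$, not inside the division by $2$ as in the stated $Q$; so your claim that the terms ``collect into the quadratic form stated in the theorem'' does not literally hold, and the statement (reproduced verbatim in the paper's own proof) appears to carry a typo in the placement of $\sum_{i<j}m_im_j$. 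For $k=2$ there are no cross terms, so the case $C\Phi_{2,-1}$ used later is unaffected.
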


\begin{proof}
We have that $C\Phi_{k,\alpha}(n)$ is the coefficient of $z^\alpha$ in
\begin{equation*}
\varphi^k(z,q)=\prod_{\lambda_i=0}^{\infty}(1+zq^{\lambda_i+1})^k(1+z^{-1}q^{\lambda_i})^k
\end{equation*}
Then, by the version of the Jacobi Triple Product in \cite{AndrewsGFP}, we have
\begin{equation*}
\varphi^k(z,q)=\frac{1}{(q;q)^k_{\infty}}\sum_{m_1,m_2,...,m_k=-\infty}^{\infty}z^{m_1+m_2+\cdots+m_k}q^{\binom{m_1+1}{2}+\binom{m_2+1}{2}+\cdots+\binom{m_k+1}{2}}.
\end{equation*}
To get the coefficient of $z^\alpha$, we set $m_1+m_2+\cdots+m_k=\alpha$. Then, 
\begin{align*}
C\Phi_{k,\alpha}(q)&=\frac{1}{(q;q)^k_{\infty}}\sum_{m_1,m_2,...,m_{k-1}=-\infty}^{\infty}q^{\binom{m_1+1}{2}+\cdots+\binom{m_{k-1}+1}{2}+\binom{a-m_{k-1}-\cdots-m_2-m_1+1}{2}} \nonumber \\
&=\frac{1}{(q;q)^k_{\infty}}\sum_{m_1,m_2,...,m_{k-1}=-\infty}^{\infty}q^{{m_1}^2+{m_2}^2+\cdots+{m^2_{k-1}}-\alpha(m_1+m_2+\cdots+m_{k-1})} \nonumber \\
&\hspace{.8in} \times q^{\frac{\alpha^2+\alpha+\sum_{1\leq i<j\leq k-1}m_im_j}{2}}. 
\end{align*}
\end{proof}

Next, we establish product formulas for $\Phi_{2,-1}(q)$ and $C\Phi_{2,-1}(q)$ using the generating functions in Theorems \ref{singlevar_phi} and \ref{singlevar_cphik}.
\begin{cor}\label{phi_2colors}
For all nonnegative integers $n$, 
\begin{equation*}
\Phi_{2,-1}(q)=\prod_{n=1}^{\infty}\frac{1}{(1-q^{2n-1})^2(1-q^{12n-8})(1-q^{12n-6})(1-q^{12n-4})(1-q^{12n})}.
\end{equation*}
\end{cor}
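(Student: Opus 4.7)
The plan is to specialize Theorem~\ref{singlevar_phi} to $k=2$, $\alpha=-1$, apply the Jacobi triple product, and then carry out standard $q$-Pochhammer manipulations. With these parameters the sum collapses to a single index $m_1$, giving
\begin{equation*}
\Phi_{2,-1}(q)=-\frac{1}{(q;q)_\infty^2}\sum_{m_1\in\mathbb{Z}}\zeta^{-m_1-2}q^{m_1^2+m_1},
\end{equation*}
where $\zeta=e^{2\pi i/3}$. Reindexing by $m=m_1+1$ rewrites the exponent as $2\binom{m}{2}$ and produces an extra $\zeta^2$, so the inner sum equals $\zeta^2\sum_m \zeta^{-m}(q^2)^{\binom{m}{2}}$.

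Next, I would apply the Jacobi triple product $\sum_m w^m q^{\binom{m}{2}}=(-w;q)_\infty(-q/w;q)_\infty(q;q)_\infty$ with $w=\zeta^{-1}=\zeta^2$ and base $q^2$, obtaining
\begin{equation*}
\sum_m \zeta^{-m}(q^2)^{\binom{m}{2}}=(-\zeta^2;q^2)_\infty(-\zeta q^2;q^2)_\infty(q^2;q^2)_\infty.
\end{equation*}
Factoring the initial term $1+\zeta^2=-\zeta$ out of $(-\zeta^2;q^2)_\infty$ and collecting all $\zeta$-prefactors via $\zeta^3=1$ leaves
\begin{equation*}
\Phi_{2,-1}(q)=\frac{(q^2;q^2)_\infty}{(q;q)_\infty^2}\prod_{n=1}^\infty (1+\zeta q^{2n})(1+\zeta^2 q^{2n}).
\end{equation*}

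The crucial step is the cyclotomic identity $(1+\zeta x)(1+\zeta^2 x)=1-x+x^2$, which follows from $\zeta+\zeta^2=-1$ together with $\zeta^3=1$ and converts the complex product into a real one. Combined with $1-x+x^2=(1+x^3)/(1+x)$, this reduces the infinite product to $(-q^6;q^6)_\infty/(-q^2;q^2)_\infty$, so
\begin{equation*}
\Phi_{2,-1}(q)=\frac{(q^2;q^2)_\infty(-q^6;q^6)_\infty}{(q;q)_\infty^2(-q^2;q^2)_\infty}.
\end{equation*}

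To finish, I would clear both alternating-sign Pochhammers via $(q^n;q^n)_\infty(-q^n;q^n)_\infty=(q^{2n};q^{2n})_\infty$, cancel $(q^2;q^2)_\infty^2$ against the even part of $(q;q)_\infty^2=(q;q^2)_\infty^2(q^2;q^2)_\infty^2$, and split $(q^4;q^4)_\infty$ and $(q^6;q^6)_\infty$ into residue classes modulo~$12$; the resulting factors are precisely those in the stated product. The only non-routine step is the roots-of-unity reduction $(1+\zeta q^{2n})(1+\zeta^2 q^{2n})=1-q^{2n}+q^{4n}$; everything afterwards is standard $q$-series bookkeeping.
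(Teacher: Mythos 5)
Your proposal is correct and follows the same route as the paper through all the substantive steps: specializing Theorem~\ref{singlevar_phi} to $k=2$, $\alpha=-1$, applying the Jacobi triple product with base $q^2$, and collapsing the cube-root-of-unity factors via $(1+\zeta x)(1+\zeta^2 x)=1-x+x^2$ to reach $\frac{(q^2;q^2)_\infty}{(q;q)_\infty^2}\prod_{n\ge 1}(1-q^{2n}+q^{4n})$, which is exactly the paper's $\Psi_2(q)$. The only divergence is at the very end: the paper stops there and cites Drake for the identification of $\Psi_2(q)$ with the stated product, whereas you finish the dissection yourself --- using $1-x+x^2=(1+x^3)/(1+x)$, Euler's $(q^n;q^n)_\infty(-q^n;q^n)_\infty=(q^{2n};q^{2n})_\infty$, and the splitting of $(q^4;q^4)_\infty(q^6;q^6)_\infty/(q^{12};q^{12})_\infty$ into residue classes modulo $12$ --- and that bookkeeping checks out, yielding precisely the factors $(1-q^{2n-1})^2(1-q^{12n-8})(1-q^{12n-6})(1-q^{12n-4})(1-q^{12n})$, so your version is self-contained where the paper's is not.
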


\begin{proof}
From Theorem \ref{singlevar_phi} we have
\begin{align*}
\Phi_{2,-1}(q)&=\frac{1}{(q;q)^2_{\infty}}\sum_{m=-\infty}^{\infty}(-1)^{-1}\zeta^{-m-2}q^{m^2+m}\\
&=\frac{-\zeta^{-2}}{(q;q)^2_{\infty}}\sum_{m=-\infty}^{\infty}\zeta^{-m}q^{m^2+m}.\\
\end{align*}
By the Jacobi Triple products, we have
\begin{align*}
\Phi_{2,-1}(q)&=-\zeta^{-2}\prod_{i=1}^{\infty}\frac{(1-q^{2i})(1+\zeta^{-1}q^{2i})(1+\zeta q^{2i-2})}{(1-q^i)^2}\\
&=-\zeta^{-2}(1+\zeta)\prod_{i=0}^{\infty}\frac{(1-q^{2i})(1+\zeta^{-1}q^{2i})(1+\zeta q^{2i})}{(1-q^i)^2}\\
&=\prod_{i=0}^{\infty}\frac{(1-q^{2i})(1-q^{2i}+q^{4i})}{(1-q^i)^2}\\
&=\Psi_2(q)
\end{align*}
where $\Psi_{2}(q)$ is defined, and the remainder of the proof is given by Drake in \cite{Drake}.
\end{proof}

\begin{cor} \label{cphi_2colors}
For all nonnegative integers $n$, 
\begin{equation*}\label{singlevar_product}
C\Phi_{2,-1}(q)=\prod_{i=1}^{\infty}\frac{(1-q^{2i})(1+q^{2i})(1+q^{2i-2})}{(1-q^i)^2}.
\end{equation*}
\end{cor}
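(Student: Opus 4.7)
The plan is to specialize Theorem~\ref{singlevar_cphik} at $k=2$ and $\alpha=-1$, obtain a one-variable theta series, and then collapse it to a product via the Jacobi triple product. Since $k-1=1$, only the summation index $m_1=m$ survives and the cross-term sum $\sum_{i<j\le k-1}m_im_j$ is empty. With $\alpha=-1$ the constant contribution $\tfrac{\alpha^2+\alpha}{2}$ also vanishes, so
\begin{equation*}
Q(m) \;=\; m^2 - \alpha\, m + \tfrac{\alpha^2+\alpha}{2} \;=\; m^2 + m.
\end{equation*}
Theorem~\ref{singlevar_cphik} therefore yields
\begin{equation*}
C\Phi_{2,-1}(q) \;=\; \frac{1}{(q;q)^2_\infty}\sum_{m=-\infty}^{\infty} q^{m^2+m}.
\end{equation*}

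Next, I would evaluate the theta series. Writing $q^{m^2+m}=(q^2)^{m(m+1)/2}$ and using the Jacobi triple product in the form
\begin{equation*}
\sum_{m=-\infty}^{\infty} z^{m} Q^{m(m+1)/2} \;=\; \prod_{n=1}^{\infty}(1-Q^{n})(1+z Q^{n})(1+z^{-1}Q^{n-1}),
\end{equation*}
with $Q=q^{2}$ and $z=1$, gives
\begin{equation*}
\sum_{m=-\infty}^{\infty} q^{m^2+m} \;=\; \prod_{n=1}^{\infty}(1-q^{2n})(1+q^{2n})(1+q^{2n-2}).
\end{equation*}

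Dividing by $(q;q)^2_\infty=\prod_{n=1}^{\infty}(1-q^n)^2$ then produces the claimed product formula. The only real point of care is selecting the version of the Jacobi triple product whose quadratic exponent is $m(m+1)/2$ rather than $m^2$ or $m(m-1)/2$, so that the substitution $Q=q^2$ lands on the desired $q^{m^2+m}$; after that, no further manipulation is needed.
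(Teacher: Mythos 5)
Your proof is correct and is essentially identical to the paper's: both specialize Theorem~\ref{singlevar_cphik} at $k=2$, $\alpha=-1$ to get $\sum_m q^{m^2+m}/(q;q)_\infty^2$ and then apply the Jacobi triple product. You simply make explicit the particular form of the triple product (with exponent $m(m+1)/2$ and $Q=q^2$, $z=1$) that the paper leaves implicit.
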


\begin{proof}
From Theorem \ref{singlevar_cphik}, with $k=2$ and $\alpha=-1$, we have
\begin{equation*}
C\Phi_{2,-1}(q)=\frac{\sum_{m=-\infty}^{\infty}q^{m^2+m}}{\prod_{i=1}^{\infty}(1-q^i)^2}.
\end{equation*}
We apply the Jacobi Triple Product to obtain the desired formula. 
\end{proof}

\section{Congruences}
Now, we will used the generating functions established in the previous section to establish some congruences. 
\begin{thm} For every nonnegative integer $n$, 
\begin{equation*}
\phi_{2,-1}(5n+4)\equiv 0 \pmod 5.
\end{equation*}
\end{thm}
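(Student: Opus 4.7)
The plan is to imitate the classical Ramanujan proof of $p(5n+4)\equiv 0\pmod 5$. Specifically, I will reduce $\Phi_{2,-1}(q)$ modulo $5$ to a product of $1/(q^5;q^5)_\infty$, which contributes only $q^{5k}$-terms, with a series whose $q^{5n+4}$ coefficients each carry a factor of $5$ inherited from Jacobi's triangular-number identity for $(q;q)_\infty^3$.

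Rather than starting from the product in Corollary \ref{phi_2colors}, I would use the intermediate theta series that already appears in its proof,
$$\Phi_{2,-1}(q)=\frac{-\zeta}{(q;q)_\infty^2}\sum_{m=-\infty}^{\infty}\zeta^{-m}q^{m(m+1)},\qquad \zeta=e^{2\pi i/3}.$$
Splitting the inner sum by $m\bmod 3$, using $\zeta+\zeta^2=-1$, and observing that the substitution $t\mapsto -t-1$ identifies the $m\equiv 0$ and $m\equiv 2$ summands, the complex coefficients collapse to give
$$\Phi_{2,-1}(q)=\frac{A(q)-B(q)}{(q;q)_\infty^2},$$
where
$$A(q)=\sum_{t=-\infty}^{\infty}q^{9t^2+3t},\qquad B(q)=\sum_{t=-\infty}^{\infty}q^{9t^2+9t+2}.$$

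Next, Ramanujan's congruence $(q;q)_\infty^5\equiv(q^5;q^5)_\infty\pmod 5$ yields
$$\Phi_{2,-1}(q)\equiv\frac{(A(q)-B(q))(q;q)_\infty^3}{(q^5;q^5)_\infty}\pmod 5,$$
and Jacobi's identity supplies $(q;q)_\infty^3=\sum_{j\ge 0}(-1)^j(2j+1)q^{j(j+1)/2}$. Because $1/(q^5;q^5)_\infty$ contributes only $q^{5k}$-terms, it is enough to prove that $[q^{5n+4}]\bigl((A(q)-B(q))(q;q)_\infty^3\bigr)\equiv 0\pmod 5$ for every $n$. A short residue check finishes the proof: the exponents $9t^2+3t$ and $9t^2+9t+2$ always lie in $\{0,1,2\}\pmod 5$, while triangular numbers $j(j+1)/2$ always lie in $\{0,1,3\}\pmod 5$; the only way to reach total exponent $\equiv 4\pmod 5$ is therefore to pair an $(A-B)$-exponent $\equiv 1$ with a triangular exponent $\equiv 3\pmod 5$, but $j(j+1)/2\equiv 3\pmod 5$ forces $j\equiv 2\pmod 5$, and then the Jacobi weight $2j+1$ is already divisible by $5$. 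The main step requiring care is the first decomposition into $A(q)-B(q)$: once $\Phi_{2,-1}(q)$ is collapsed to the eta-quotient of Corollary \ref{phi_2colors} the modular residue information is buried inside the infinite products and the Ramanujan-style pairing argument is much harder to see, so the theta skeleton must be kept explicit from the outset; thereafter the proof is a mechanical imitation of the classical argument.
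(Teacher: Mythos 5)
Your proof is correct and follows essentially the same route as the paper: both reduce $\Phi_{2,-1}(q)$ modulo $5$ to $\bigl(A(q)-B(q)\bigr)(q;q)_\infty^3/(q^5;q^5)_\infty$ with $A(q)-B(q)=\sum_{m}\bigl(q^{9m^2-3m}-q^{9m^2+9m+2}\bigr)$, and then show that any contribution to $q^{5n+4}$ forces the Jacobi weight $2j+1$ to be divisible by $5$. The only cosmetic difference is that you extract the theta skeleton $A-B$ directly from the cube-roots-of-unity sum appearing in the proof of Corollary \ref{phi_2colors}, whereas the paper recovers the same series from the eta-quotient of that corollary via the Jacobi triple product.
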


\begin{proof}
From Corollary \ref{phi_2colors},
\begin{equation*}
\Phi_{2,-1}(q)=\prod_{n=1}^{\infty}\frac{1}{(1-q^{2n-1})^2(1-q^{12n-8})(1-q^{12n-6})(1-q^{12n-4})(1-q^{12n})}.
\end{equation*}
Then,
\begin{align*}
\Phi_{2,-1}(q)&=\prod_{n=1}^{\infty}\frac{(1-q^{n})^3(1-q^{2n})(1-q^{12n-2})(1-q^{12n-10})}{(1-q^{n})^5}\\
&\equiv \prod_{n=1}^{\infty}\frac{(1-q^{n})^3(1-q^{2n})(1-q^{12n-2})(1-q^{12n-10})}{(1-q^{5n})}\pmod 5\\
&=\frac{\sum_{j=0}^{\infty}{(-1)^{j}(2j+1)q^{\binom{j+1}{2}}\prod_{n=1}^{\infty}(1-q^{2n})(1-q^{12n-2})(1-q^{12n-10})}}{\prod_{n=1}^{\infty}(1-q^{5n})}\\
&=\frac{\sum_{j=0}^{\infty}(-1)^{j}(2j+1)q^{\binom{j+1}{2}}\sum_{m=-\infty}^{\infty}q^{9m^2-3m}-q^{9m^2+9m+2}}{\prod_{n=1}^{\infty}(1-q^{5n})}\\
&=\frac{\sum_{j=0}^{\infty}(-1)^{j}(2j+1)q^{\binom{j+1}{2}}\sum_{k=0}^{\infty}a_{k}q^{k^2+k}}{\prod_{n=1}^{\infty}(1-q^{5n})}\\
\end{align*}

where $a_{k}=1$ if $k\equiv 0,2 \pmod 3$ and $a_{k}=-2$ if $k\equiv 1 \pmod 3$.

So, 
\begin{align*}
\Phi_{2,-1}(q) &\equiv \frac{\sum_{j=0}^{\infty}\sum_{k=0}^{\infty}(-1)^{j}(2j+1)a_{k}q^{\binom{j+1}{2}+k^2+k}}{\prod_{n=1}^{\infty}(1-q^{5n})} \pmod 5\\
\end{align*}

Then, we get a contribution to $q^{5n+4}$ when
\begin{equation*}
\binom{j+1}{2}+k^2+k \equiv 4 \pmod 5
\end{equation*}

or equivalently, when 
\begin{equation*}
(2j+1)^2+2(2k+1)^2 \equiv 0 \pmod 5.
\end{equation*}

So $2j+1 \equiv 2k+1 \equiv 0 \pmod 5$.
\end{proof}

\begin{thm}
For every nonnegative integer $n$, 
\begin{equation*}
c\phi_{2,-1}(5n+4)\equiv 0\pmod 5.
\end{equation*}
\end{thm}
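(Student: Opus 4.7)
The plan is to mimic the argument given for $\phi_{2,-1}(5n+4)$, but starting from the cleaner sum form of Corollary \ref{cphi_2colors}, namely
$$C\Phi_{2,-1}(q) = \frac{\sum_{m=-\infty}^{\infty} q^{m^2+m}}{(q;q)_\infty^2}.$$
First I would rewrite $(q;q)_\infty^{-2} = (q;q)_\infty^{3}/(q;q)_\infty^{5}$ and apply the standard fact $(q;q)_\infty^{5}\equiv (q^5;q^5)_\infty \pmod 5$ to obtain
$$C\Phi_{2,-1}(q) \equiv \frac{(q;q)_\infty^{3}\,\sum_{m=-\infty}^{\infty} q^{m^2+m}}{(q^5;q^5)_\infty} \pmod 5.$$

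Next, I would invoke Jacobi's identity $(q;q)_\infty^{3} = \sum_{j\ge 0}(-1)^j(2j+1)q^{\binom{j+1}{2}}$ to turn the numerator into a double sum
$$\sum_{j\ge 0}\sum_{m=-\infty}^{\infty}(-1)^{j}(2j+1)\,q^{\binom{j+1}{2}+m^2+m}.$$
Since $1/(q^5;q^5)_\infty$ contributes only powers of $q^5$, it suffices to show that every $(j,m)$ with $\binom{j+1}{2}+m^2+m\equiv 4\pmod 5$ already has $5\mid(2j+1)$.

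The key step is then an elementary quadratic residue argument: multiplying $\binom{j+1}{2}+m^2+m\equiv 4\pmod 5$ by $8$ and completing squares gives
$$(2j+1)^2 + 2(2m+1)^2 \equiv 0 \pmod 5.$$
Because $2$ is a nonresidue modulo $5$, the only way for a sum of a square and twice a square to vanish mod $5$ is for both squares to vanish. Hence $2j+1\equiv 0\pmod 5$, which makes the coefficient $(2j+1)$ in the double sum divisible by $5$, and the congruence follows.

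The main obstacle is essentially nonexistent once the right form of the generating function is chosen: the proof is shorter than the previous one because the numerator already carries the theta sum $\sum q^{m^2+m}$ directly, so no further Jacobi Triple Product splitting (of the type that produced the $9m^2\pm 3m$ and $9m^2+9m+2$ exponents in the preceding theorem) is required. The only mild care needed is in the algebra of the quadratic residue step, and in noting that the $m$-sum runs over all integers while $j$ is nonnegative, which does not affect the divisibility conclusion.
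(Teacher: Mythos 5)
Your proposal is correct and follows essentially the same route as the paper: both reduce to $(q;q)_\infty^{3}\sum_m q^{m^2+m}/(q^5;q^5)_\infty \pmod 5$ via $(q;q)_\infty^5\equiv(q^5;q^5)_\infty$, expand the cube by Jacobi's identity, and finish with the same completion of squares $(2j+1)^2+2(2m+1)^2\equiv 0\pmod 5$. The only (harmless) difference is that you start from the theta-sum form appearing in the proof of Corollary \ref{cphi_2colors} rather than re-deriving it from the product via the Jacobi Triple Product, as the paper does.
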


\begin{proof}
From Corollary \ref{cphi_2colors},
\begin{equation*}
C\Phi_{2,-1}(q)=\prod_{n=1}^{\infty}\frac{(1-q^{2i})(1+q^{2i})(1+q^{2i-2})}{(1-q^i)^2}.
\end{equation*}
Then,
\begin{align*}
C\Phi_{2,-1}(q)&=\prod_{n=1}^{\infty}\frac{(1-q^i)^3(1-q^{2i})(1+q^{2i})(1+q^{2i-2})}{(1-q^i)^5}\\
&\equiv \prod_{n=1}^{\infty}\frac{(1-q^i)^3(1-q^{2i})(1+q^{2i})(1+q^{2i-2})}{(1-q^{5i})} \pmod 5\\
&=\frac{\sum_{m=0}^{\infty}(-1)^m(2m+1)q^{\binom{m+1}{2}}\sum_{j=-\infty}^{\infty}q^{j^2+j}}{\prod_{i=0}^{\infty}(1-q^{5i})}\pmod 5\\
\end{align*}

So, we get a contribution to $q^{5n+4}$ when
\begin{equation*}
\binom{m+1}{2}+j^2+j \equiv 4 \pmod 5
\end{equation*}
or equivalently, 
\begin{equation*}
(2m+1)^2+2(2j+1)^2 \equiv 0 \pmod 5
\end{equation*}

So, $(2m+1)\equiv 0 \pmod 5$, and therefore,  $c\phi_{2,-1}(5n+4) \equiv 0 \pmod 5$. 

\end{proof}

\end{document}